\newtheorem{thm}{Theorem}
\newtheorem{prop}[thm]{Proposition}
\newtheorem{lemm}[thm]{Lemma}
\newtheorem{dfn}[thm]{Definition}
\def\R{\mathbb{R}}
\def\epar{\partial_{\varepsilon}}
\def\pa{\partial }
\def\dom{\mathrm{dom}\,}
\def\hyp{\mathrm{hyp}\,}
\def\co{\mathrm{co}\,}
\def\e{\varepsilon}
\def\ffi{\varphi}
\title{New Multirectional Mean Value Inequality\footnote{Partially supported by Bulgarian National Scientific Fund under Grant DFNI-I02/10.}}
\author{M. Hamamdjiev, M. Ivanov}
\date{January, 2017}
\begin{document}

\maketitle

\begin{abstract}
  We establish new and stronger inequality of Clarke-Ledyaev type by direct construction.
\end{abstract}

\textbf{2010 AMS Subject Classification: 49K27, 49J53, 49J52.}

\textbf{Keywords and phrases: Multidirectional Mean Value Theorem, Subdifferential.}

	\section{Introduction.}
    In theories on the notion subdifferential it is often cumbersome to list for which of the many subdifferntials a given statement holds. A way around this issue, proposed by Ioffe and established by Thibault and others, is to consider the notion of subdifferential abstractly: as defined by set of axioms rather than by construction. Some of these axioms will be principal, like (P1), (P2) and (P3) below, and some technical, like (P4).

    As it is  well known, a subdifferential operator $\partial$ applied to a lower semicontinuous function $f:X\to\mathbb{R}\cup\{\infty\}$ on a Banach space $X$ produces a multivalued map
    $$
      \pa f:X\to 2^{X^*}.
    $$

		\begin{dfn}\label{def:subdif}
      We call the subdifferential $\pa$ \emph{feasible} if the following properties hold:
      \begin{enumerate}
        \item[\rm{(P1)}] $\pa f(x)=\emptyset$ if $x\not\in\dom f$.
        \item[\rm{(P2)}] $\pa f(x) = \pa g(x)$ whenever $f$ and $g$ coincide in a neighbourhood of $x$.
        \item[\rm{(P3)}] If $f$ is convex and continuous in a neighbourhood of $x$ then $\pa f(x)$ coincides with the standard subdifferential in Convex Analysis.
        \item[\rm{(P4)}] If $g$ is convex and continuous in a neighbourhood of $z$ and $f+g$ has local minimum at $z$ then for each $\e>0$ there are $p\in \pa f(x)$ and $q\in \pa g(y)$ such that
      $$
        \|x-z\|<\e,\ |f(x)-f(z)|<\e,\ \|y-z\|<\e,\mbox{ and }\|p+q\|<\e.
      $$
    \end{enumerate}
    \end{dfn}

		We discuss these axioms in Section~\ref{sec:axioms}. There we point out that most of the known subdifferentials satisfy them under natural conditions on the space.

		Here we state our main result. Let $B_\delta:=B+\delta B_X$, where $B$ is any subset of $X$ and $B_X$ is the closed unit ball. For $A,B\subseteq X$ let $[A,B]$ be the convex hull of $A$ and $B$.

		\begin{thm}\label{thm:cl}
		Let $X$ be a Banach space and let $\pa$ be a feasible subdifferential.
      Let $A$ and $B$ be non-empty closed, bounded and convex subsets of $X$. Let $f:X\to\R\cup\{\infty\}$ be a proper lower semicontinuous function
      such that $A\cap \dom f\neq\emptyset$.
      Let $f$ be bounded below on $C:=\overline{[A,B]_\delta}$ for some $\delta>0$. Let
      \begin{equation}\label{eq:D}
        \mu < \inf_C f.
      \end{equation}
      Let $r,s\in\R$ be such that
			\begin{equation}\label{eq:rs}
        r = \inf_Af,\ s<\inf_{B_\delta}f.
      \end{equation}
			Then for each $\e>0$ there are $\xi\in [A,B]_\delta$ and $p\in\pa f(\xi)$ such that
      \begin{equation}\label{eq:f_localisation2}
        f(\xi) < \inf_{[A,B]}f+|r-s|+\e
      \end{equation}
			\begin{equation}\label{eq:norm_p}
        \|p\| < \frac{\max\{r,s\}-\mu}{\delta}+\e,
      \end{equation}
			and
			\begin{equation}\label{eq:mvi}
        \inf_B p - \inf_A p > s-r.
      \end{equation}
		\end{thm}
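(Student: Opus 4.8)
The plan is to derive the theorem from the fuzzy sum rule (P4) applied to $f$ and a single, explicitly constructed convex Lipschitz function $g$ that plays three roles at once: a confining barrier that keeps the relevant approximate minimizer of $f+g$ inside the open tube, a Lipschitz bound controlling $\|p\|$, and a ``slope'' device whose subgradients realise the multidirectional estimate \eqref{eq:mvi}. Throughout, $o(1)$ denotes a quantity controlled by the auxiliary tolerances introduced below.

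\emph{Reductions and the penalty.} Since $f$ is controlled only on $C$, I would first replace $f$ by $\bar f:=f+\delta_C$ (indicator of $C$); it is proper, l.s.c.\ and bounded below, and by (P2) $\pa\bar f=\pa f$ at every point of $\operatorname{int}C$. Writing $C_t:=(1-t)A+tB$, so that $[A,B]=\bigcup_{t\in[0,1]}C_t$, one checks directly that $\{(x,t):x\in C_t,\ t\in[0,1]\}$ is convex, whence $(x,t)\mapsto d(x,C_t)$ is convex (it is a partial minimization of $\|x-\cdot\|$ over that set). Put $L:=\bigl(\max\{r,s\}-\mu\bigr)/\delta$, which is positive because $\mu<\inf_Cf\le r\le\max\{r,s\}$, fix a small $\theta>0$ (its smallness to be arranged below against $\varepsilon$, against $\eta:=\inf_Cf-\mu>0$, and against $\beta:=\inf_{B_\delta}f-s>0$), and take the affine $c:[0,1]\to[0,\infty)$ equal to $(s-r+\theta)(1-t)$ if $s\ge r$ and to $(r-s-\theta)t$ if $s<r$, so that $c\ge0$, $\sup c\le|r-s|+\theta$, and the constant slope satisfies $-c'\equiv s-r+\theta$. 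Define
$$
  g(x):=\inf\bigl\{\,c(t)+L\,d(x,C_t):t\in[0,1]\,\bigr\}.
$$
Then $g$ is convex (partial minimization in $t$ of a jointly convex function), finite (dominated by its value at $t=0$), hence continuous, and $L$-Lipschitz; moreover $g\ge L\,d\bigl(\cdot,\overline{[A,B]}\bigr)$ on $X$ and $g\le\sup c\le|r-s|+\theta$ on $[A,B]$.

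\emph{Localisation and application of (P4).} Evaluating $\bar f+g$ at a near-minimizer of $f$ over $A$ gives $\inf(\bar f+g)\le\max\{r,s\}+\theta+o(1)$ (use $g\le c(0)$ there and $c(0)+r=\max\{r,s\}+\theta$ or $\max\{r,s\}$). On the other hand, whenever $d(x,[A,B])\ge\delta-\rho$ one has $g(x)\ge L(\delta-\rho)=\max\{r,s\}-\mu-L\rho$ while $\bar f(x)\ge\mu+\eta$, so $(\bar f+g)(x)\ge\max\{r,s\}+\eta-L\rho$ there. Choosing $\rho$, then $\theta$, and then the Ekeland/(P4) tolerances small enough that $\eta-L\rho$ exceeds $\theta$, every sufficiently good approximate minimizer of $\bar f+g$ lies in $[A,B]_{\delta-\rho}\subseteq\operatorname{int}C$. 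I would then run Ekeland's variational principle on $\bar f+g$ with a small parameter $\gamma$ to obtain a genuine minimizer $z\in[A,B]_{\delta-\rho}$ of $\bar f+g+\gamma\|\cdot-z\|$, and apply (P4) with nonconvex part $\bar f$ and convex continuous part $g+\gamma\|\cdot-z\|$, obtaining $\xi$ and $y$ near $z$ with $|f(\xi)-f(z)|$ small, $p\in\pa\bar f(\xi)=\pa f(\xi)$, and $q_0\in\pa g(y)$ with $\|p+q_0\|$ small (the $\gamma$-term being absorbed via the convex sum rule, one summand being everywhere continuous). This already yields $\xi\in[A,B]_\delta$; then \eqref{eq:f_localisation2} follows from $f(\xi)\approx f(z)\le(\bar f+g)(z)\le\inf_{[A,B]}f+|r-s|+\theta+o(1)$, and \eqref{eq:norm_p} from $\|p\|\le\|q_0\|+o(1)\le L+o(1)$ since $g$ is $L$-Lipschitz.

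\emph{The multidirectional estimate (the crux).} Since $g$ is the partial minimization in $t$ of the jointly convex $(x,t)\mapsto c(t)+L\,d(x,C_t)$, elements of $\pa g(y)$ are governed by first-order optimality at an (approximately) optimal $\bar t$, together with the normal cone of $\{(x,t):x\in C_t\}$. At an optimal triple $(\bar t,\bar a,\bar b)\in[0,1]\times A\times B$ realising $g(y)$ one finds that each $q_0\in\pa g(y)$ lies in $L\bigl(N_A(\bar a)\cap N_B(\bar b)\bigr)$ --- so $\sup_Aq_0=\langle q_0,\bar a\rangle$ and $\sup_Bq_0=\langle q_0,\bar b\rangle$ --- and that optimality in $t$ forces $\langle q_0,\bar a-\bar b\rangle=-c'(\bar t)$ when $\bar t\in(0,1)$ and $\langle q_0,\bar a\rangle-\sup_Bq_0\ge-c'(0)$ when $\bar t=0$. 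The endpoint $\bar t=1$ must be excluded, and can be: if it occurred, then $g(z)=(\bar f+g)(z)-f(z)\le L\delta-\eta+o(1)$ together with $g(z)=c(1)+L\,d(z,B)$ would force $d(z,B)<\delta$, i.e.\ $z\in B_\delta$, whence $(\bar f+g)(z)\ge\inf_{B_\delta}f+c(1)\ge\max\{r,s\}+\beta-\theta$, contradicting $\inf(\bar f+g)\le\max\{r,s\}+\theta+o(1)$ once $\theta<\beta/2$ and the tolerances are small. In the remaining cases $-c'\equiv s-r+\theta$ gives $\sup_Aq_0-\sup_Bq_0\ge s-r+\theta$, so, using $p\approx-q_0$ and boundedness of $A,B$,
$$
  \inf_Bp-\inf_Ap\ \ge\ \bigl(\sup_Aq_0-\sup_Bq_0\bigr)-o(1)\ \ge\ s-r+\theta-o(1)\ >\ s-r,
$$
which is \eqref{eq:mvi}. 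The hard part is exactly this last paragraph: computing $\pa g$ for the marginal function $g$ (a routine $\varepsilon$-perturbation absorbs possible non-attainment of the infimum), identifying the normal cone to $\{(x,t):x\in C_t\}$, and cleanly ruling out the ``exit through $B_\delta$'' endpoint; once those are in hand, the conclusion is a matter of choosing $\theta,\rho,\gamma$ and the (P4)-tolerance small enough, in that order.
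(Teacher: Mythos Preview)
Your proposal is correct and is, in fact, the same proof as the paper's, only packaged in different language. Your penalty $g(x)=\inf_{t\in[0,1]}\{c(t)+L\,d(x,C_t)\}$ is, up to an additive constant, exactly $-\varphi_K$ from Proposition~\ref{prop:fi}: writing $\psi(y)=\sup\{(1-t)r+ts_1:y\in C_t\}$ one gets $-\varphi_K(x)=\inf_{t}\{-(1-t)r-ts_1+K\,d(x,C_t)\}$, and your slope perturbation $\theta$ is precisely the paper's replacement of $s$ by $s_1=s+\theta$. Your normal-cone computation for $\partial g$ (via the convex graph $\{(x,t):x\in C_t\}$) reproduces the conclusion of Proposition~\ref{prop:psi} together with Lemma~\ref{sup_conv}; your exclusion of the endpoint $\bar t=1$ is Lemma~\ref{lem:seq}; and your Ekeland-plus-(P4) step is Proposition~\ref{p4p}. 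The only slip is cosmetic: in the endpoint argument you write $g(z)=c(1)+L\,d(z,B)$, but the optimal $\bar t$ pertains to $g(y)$, not $g(z)$; since $\|y-z\|$ is small and $g$ is $L$-Lipschitz this costs only an extra $o(1)$, so the contradiction still goes through.
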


		We will compare our result to what is known. Historically, the original multidirectional inequality can be found in \cite{cl1}. It compares the values of a locally Lipschitz function on two bounded, closed and convex subsets of a Banach space, one of which is compact. The next work in the field is \cite{cl2}. There we can find a multidirectional inequality, which compares the values of a lower semicontinuous and bounded below function on a point and a closed, convex and bounded set in the setting of a Hilbert space by using the proximal subgradient. In \cite{zhu1} we find a multidirectional inequality on $\beta$-smooth Banach spaces in a configuration of a point and a closed, bounded and convex set, a lower semicontinuous bounded below function and the corresponding $\beta$ subdifferential. There are a number of subsequent developments, for example \cite{iz2}, \cite{lez}, where we can find different kind of relaxations: non-convexness of a set, function not bounded below, etc.\\

		In this work, one can see that compared to the results from \cite{cl1},\cite{cl2}, \cite{zhu1} we obtain an inequality for two sets for a lower semicontinuous bounded below function on a Banach space and a feasible subdifferential. Moreover, in the conditions of \cite{cl1} we have a stronger inequality, as
		$$
			\inf_Bf-\sup_Af\le\inf_Bf-\inf_Af
		$$
		Furthermore, from the construction is clear that our inequality \eqref{eq:norm_p} is close to the optimal.

    The main tool for our proof is the function $\ffi_K$ constructed in Section~\ref{sec:constr}. By sketching the graph of $\ffi_K$ the reader would readily grasp the idea.

    The reminder of the article is organised as follows: In Section~\ref{sec:axioms} we discuss the axioms of subdifferential. Section~\ref{sec:constr} is devoted to the main construction. Finally, the proof of Theorem~\ref{thm:cl} is presented in Section~\ref{sec:proof}.

  \section{On axioms of subdifferential.}\label{sec:axioms}
    In this section we discuss Definition~\ref{def:subdif}.

    First, observe that (P1), (P2) and (P3) are very common. The form here is essentially that found in \cite{tz}. The fuzzy sum rule (P4) here is formally stronger than the corresponding one in \cite{tz}, but we have no example showing that it is actually stronger. This might be an interesting open question.

    It immediately follows from Corollary~4.64 \cite[p.305]{penot}  that the smooth subdifferential in smooth Banach space (the types of smoothness synchronized, of course) is feasible. For the experts it would be obvious that (P4) has the typical "fuzzy" touch of smooth subdifferential. Indeed, (P4) is modeled  after the corresponding property for smooth subdifferentials.

    Corollary~5.52 \cite[p.385]{penot} and Theorem~7.23 \cite[p.475]{penot} imply that Clarke subdifferential and G-subdifferential of Ioffe satisfy more than (P4), that is, if $z$ is an local minimum of $f+g$ then
    $$
      0\in \pa f(z) + \pa g(z).
    $$
    Similarly, it is easy to show that the limiting subdifferential on Asplund space, considered by Morduchovich and others, is feasible.

    This means that most of the subdifferentials are feasible under natural assumptions on their underlying spaces. And we put 'most' here just to be on the safe side, because we know of no meaningful counterexample. This implies that the result of this paper is very general.

    For simplicity we will derive an equivalent form of (P4) which encapsulates the standard application of Ekeland Variational principle.

    \begin{prop}\label{p4p}
      Let $X$ be a Banach space. Let $f:X\to\mathbb{R}\cup\{\infty\}$ be a proper and lower semicontinuous function.

      If the subdifferential $\partial$ is feasible then it satisfies
      \begin{enumerate}
        \item[$(\mathrm{P4}')$] If $g$ is convex and continuous and if $f+g$ is bounded below, then there are $p_n\in\pa^- f(x_n)$ and $q_n\in \pa^- g(y_n)$ such that
        $$
          \|x_n-y_n\|\to 0,\ (f(x_n)+g(y_n)) \to \inf (f+g),\ \|p_n+q_n\|\to 0.
        $$
      \end{enumerate}
    \end{prop}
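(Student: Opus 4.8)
The plan is to run Ekeland's variational principle on $f+g$ and then peel off the resulting perturbation with axiom (P4), using the convex sum rule (legitimate through (P3)) to pass from the perturbed convex function back to $g$ itself. Observe first that $g$ being continuous forces $\dom(f+g)=\dom f$, so $f+g$ is proper, lower semicontinuous, and — by hypothesis — bounded below; Ekeland's principle therefore applies to it.

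I would fix $\e_n\downarrow0$ and, for each $n$, choose $u_n$ with $(f+g)(u_n)<\inf(f+g)+\e_n$, then invoke Ekeland's variational principle (in the form producing a perturbation with coefficient $\e_n$) to obtain $z_n$ satisfying $(f+g)(z_n)<\inf(f+g)+\e_n$ and such that $w\mapsto f(w)+\bigl(g(w)+\e_n\|w-z_n\|\bigr)$ attains its global minimum at $z_n$. Put $g_n:=g+\e_n\|\cdot-z_n\|$, which is convex and continuous; then $f+g_n$ has a global — hence local — minimum at $z_n$, so (P4) applies to the pair $(f,g_n)$ at $z_n$.

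The one delicate point is the order of the quantifiers. Having fixed $z_n$, I would use continuity of $g$ \emph{at} $z_n$ to pick $\rho_n>0$ with $|g(w)-g(z_n)|<\e_n$ for $\|w-z_n\|<\rho_n$, and set $\tau_n:=\min\{\e_n,\rho_n\}$. Applying (P4) with precision $\tau_n$ yields $x_n$, $p_n\in\pa f(x_n)$, $y_n$, and $\hat q_n\in\pa g_n(y_n)$ with $\|x_n-z_n\|<\tau_n$, $|f(x_n)-f(z_n)|<\tau_n$, $\|y_n-z_n\|<\tau_n$, and $\|p_n+\hat q_n\|<\tau_n$. By the Moreau--Rockafellar sum rule — valid for the convex continuous summands $g$ and $\e_n\|\cdot-z_n\|$ and matching $\pa$ via (P3) — together with $\pa(\|\cdot-z_n\|)(y_n)\subseteq B_{X^*}$, I can write $\hat q_n=q_n+\e_n\eta_n$ with $q_n\in\pa g(y_n)$ and $\|\eta_n\|\le1$. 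Then $\|x_n-y_n\|\le\|x_n-z_n\|+\|z_n-y_n\|<2\tau_n\to0$ and $\|p_n+q_n\|\le\|p_n+\hat q_n\|+\e_n\|\eta_n\|<2\e_n\to0$.

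It remains to check the value condition: $|f(x_n)-f(z_n)|<\e_n$, and since $\|y_n-z_n\|<\rho_n$ also $|g(y_n)-g(z_n)|<\e_n$, while $|(f+g)(z_n)-\inf(f+g)|<\e_n$; adding these gives $|f(x_n)+g(y_n)-\inf(f+g)|<3\e_n\to0$. Thus $x_n,y_n,p_n,q_n$ witness (P4$'$). As indicated, the only step requiring genuine care — the main obstacle — is that the points $x_n$ and $y_n$ delivered by (P4) need not coincide and $g$ need not be uniformly Lipschitz, so $g(y_n)-g(z_n)$ cannot be controlled a priori; this is exactly why $z_n$ must be frozen before the (P4)-precision $\tau_n$ is selected.
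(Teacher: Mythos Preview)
Your proof is correct and follows the same route as the paper: Ekeland on $f+g$, then (P4) applied to $f$ and the perturbed convex function $g+\e_n\|\cdot-z_n\|$, then the convex sum rule to split off the norm term. Your extra care in freezing $z_n$ and choosing the (P4)-precision $\tau_n=\min\{\e_n,\rho_n\}$ so as to control $g(y_n)-g(z_n)$ via continuity is warranted---the paper simply asserts that the value condition follows ``from triangle inequality,'' but that step really does need the local continuity argument you spell out, since (P4) localises $f$ at $x_n$ but not $g$ at $y_n$.
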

    \begin{proof}
      Let $(z_n)_1^\infty$ be a minimizing sequence for $f+g$, that is,
      $f(z_n)+g(z_n)<\inf (f+g) + \e _n$, for some $\e_n\to 0$. By Ekeland Variational Principle, see e.g. Theorem~1.88 \cite[p.62]{penot}, there are $u_n\in X$ such that $\|u_n-z_n\|\le\e_n$ and
      $$
        f(x)+g(x)+\e_n\|x-u_n\|\ge f(u_n)+g(u_n).
      $$
      Set $\hat g_n(x):=g(x)+\e_n\|x-u_n\|$. Since $f+\hat g_n$ has a minimum at $u_n$ and $\hat g_n$ is convex and continuous, by (P4) there are $p_n\in \pa f(x_n)$ and $\hat q_n\in \pa \hat g_n(y_n)$ such that $\|x_n-u_n\| < \e_n$, $|f(x_n)-f(u_n)|<\e_n$, $\|y_n-u_n\|<\e_n$ and $\|p_n+\hat q_n\|<\e_n$.

      All conclusions of $(\mathrm{P4}')$ except the last one follow from triangle inequality.

      For the last one we note that any subdifferential of $\e_n\|\cdot-u_n\|$ is of norm less or equal to $\e_n$. By Sum Theorem of Convex Analysis, see e.g. \cite[p. 206]{penot}, there is $q_n\in \pa g(y_n)$ such that $\|q_n-\hat q_n\|\le \e_n$. We have $\|p_n+q_n\|\le 2\e_n\to 0$.
    \end{proof}

  \section{Main construction.}\label{sec:constr}
	 We will recall few notions.

        Let $f$ and $g$ be functions defined on the Banach space $X$.
	The supremal convolution (sup-convolution) of $f$ and $g$ is the function $f*g$ defined by
	$$
    f*g (x) :=\sup \{f(x-y)+g(y):\ y\in X\}
  $$
	The $\e$-subdifferential and $\e$-superdifferential of a function $f$ are:
  $$
    \epar^- f(x) := \{p\in X^*:\ p(\cdot - x) \le f(\cdot)-f(x) + \e\},
	$$
	$$
		\epar^+ f(x) := \{p\in X^*:\ p(\cdot - x) \ge f(\cdot)-f(x) - \e\},
  $$
	respectively. Note that we will use $\pa$ instead of $\pa_0$ as usual.

	The hypograph of a function $f$ is
	$$
	  \hyp f:=\{(x,t):\ t\le f(x)\}\subseteq X\times\mathbb{R}.
	$$
	Note also that the convex hull of two convex sets $A$ and $B$ can be written as:
	$$
	  [A,B]=\{z=\lambda u+(1-\lambda)v:u\in A, v\in B, \lambda\in[0,1]\}.
	$$
  \begin{lemm}\label{sup_conv}
		If $p\in \pa^+ (f*g)(x)$ and $y\in X$ is such that
		\begin{equation}\label{eq:1}
      f*g(x) -\e \le f(x-y)+g(y),
    \end{equation}
		then $p\in \partial^+_{\e}g(y)$.
	\end{lemm}
	\begin{proof}
		By the definition of sup-convolusion for all $h\in X$
		\begin{eqnarray*}
      f*g(x+h) &\ge& f((x+h)-(y+h))+g(y+h)\\
               &=&   f(x-y) + g(y+h).
    \end{eqnarray*}
		From this, $p\in \partial^+ (f*g)(x)$ and \eqref{eq:1} it follows that
		\begin{eqnarray*}
      p(h)  &\ge& f*g(x+h) - f*g(x)\\
                &\ge& (f(x-y) + g(y+h)) - (f(x-y)+g(y) + \e)\\
                &\ge& g(y+h) - g(y) - \e.
    \end{eqnarray*}
		That is, $p\in \partial^+_{\e}g(y)$.
	\end{proof}
	\begin{prop}\label{prop:psi}
	        Let $A$ and $B$ be convex subsets of the Banach space $X$. Let $r,s\in\mathbb{R}$ be such that $r\neq s$.
    We define the concave function $\psi$ by
		\begin{equation}\label{def:psi}
      \hyp \psi := \co\{ A\times [-\infty;r],\ B\times [-\infty;s]\}.
    \end{equation}
		Let $x_0\in\dom\psi = [A,B]$ be such that $\psi(x_0)\neq s$.
    Let $p\in\epar^+ \psi (x_0)$. Then
		\begin{equation}\label{eq:prop:psi}
      \inf _A p - \inf_B p \le r - s + \frac{r-s}{\psi(x_0)-s}\e.
    \end{equation}
	\end{prop}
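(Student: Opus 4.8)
### Plan of proof

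The plan is to analyze the concave function $\psi$ defined by the convex hull of the two hypographs. First I would unpack definition \eqref{def:psi}: a point $(x_0,\psi(x_0))$ lies on (or below) $\hyp\psi$, so by Caratheodory-type reasoning for convex hulls of two sets there exist $u\in A$, $v\in B$, $\lambda\in[0,1]$ and heights $t_1\le r$, $t_2\le s$ with $x_0=\lambda u+(1-\lambda)v$ and $\psi(x_0)=\lambda t_1+(1-\lambda)t_2$; to get the value of $\psi$ we take the supremum, so in fact $\psi(x_0)=\sup\{\lambda r+(1-\lambda)s: x_0=\lambda u+(1-\lambda)v,\ u\in A,\ v\in B,\ \lambda\in[0,1]\}$. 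The key quantitative consequence I want is that for any $\eta>0$ there is a representation $x_0=\lambda u+(1-\lambda)v$ with $\lambda r+(1-\lambda)s>\psi(x_0)-\eta$, and, crucially, since $\psi(x_0)\ne s$, the coefficient $\lambda$ is bounded below: solving $\lambda r+(1-\lambda)s\approx\psi(x_0)$ gives $\lambda\approx(\psi(x_0)-s)/(r-s)$, which is nonzero.

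Next I would use the hypothesis $p\in\epar^+\psi(x_0)$, i.e. $p(x-x_0)\ge\psi(x)-\psi(x_0)-\e$ for all $x$. The idea is to test this inequality at points that move $x_0$ toward $A$ and toward $B$ respectively. Fix $u\in A$, $v\in B$ and $\lambda\in(0,1]$ a near-optimal representation of $x_0$ as above. For any $a\in A$, the point $x_0+\lambda(a-u)=\lambda a+(1-\lambda)v$ lies in $[A,B]$ and, by definition of $\psi$, satisfies $\psi(x_0+\lambda(a-u))\ge\lambda r+(1-\lambda)s$. Plugging into the superdifferential inequality:
$$
\lambda\, p(a-u)\ge \psi(x_0+\lambda(a-u))-\psi(x_0)-\e\ge \lambda r+(1-\lambda)s-\psi(x_0)-\e.
$$
Similarly, for $b\in B$ the point $x_0-(1-\lambda)(b-v)=\lambda u+(1-\lambda)b$ lies in $[A,B]$ with $\psi$-value at least $\lambda r+(1-\lambda)s$, giving
$$
-(1-\lambda)\,p(b-v)\ge \lambda r+(1-\lambda)s-\psi(x_0)-\e.
$$

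Then I would combine these. Dividing the first by $\lambda>0$ and rearranging gives $p(u)-p(a)\le \big(\psi(x_0)-\lambda r-(1-\lambda)s+\e\big)/\lambda$ for all $a\in A$, hence $p(u)-\inf_A p\le \big(\psi(x_0)-\lambda r-(1-\lambda)s+\e\big)/\lambda$; rewriting, $\inf_A p\ge p(u)-\big(\psi(x_0)-\lambda r-(1-\lambda)s+\e\big)/\lambda$. If $\lambda<1$, dividing the second inequality by $-(1-\lambda)<0$ gives $p(b)-p(v)\le\big(\psi(x_0)-\lambda r-(1-\lambda)s+\e\big)/(1-\lambda)$ for all $b\in B$, so $\inf_B p\le p(v)$... wait, that is the wrong direction — I actually want an upper bound on $\inf_A p$ and a lower bound on $\inf_B p$, or rather to bound $\inf_A p-\inf_B p$. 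Let me instead add the two appropriately: testing at $a\in A$ gives an \emph{upper} bound on $p(u)-p(a)$, testing at $b\in B$ gives an \emph{upper} bound on $p(b)-p(v)$. Adding, $p(u)-p(a)+p(b)-p(v)\le \big(\psi(x_0)-\lambda r-(1-\lambda)s+\e\big)(1/\lambda+1/(1-\lambda))$ when $\lambda\in(0,1)$. Since $x_0=\lambda u+(1-\lambda)v$ we have $\lambda p(u)+(1-\lambda)p(v)=p(x_0)$, which does not immediately simplify the left side; so the cleaner route is to keep the two estimates separate and take infima over $a$ and $b$ independently, then subtract. From the first, $\inf_A p\le p(u)-\lambda^{-1}(\lambda r+(1-\lambda)s-\psi(x_0)-\e)$ is false in direction — $\inf_A p\le p(a)$ for the minimizing $a$, and $p(u)-p(a)\le C/\lambda$ gives $p(u)-\inf_A p\le C/\lambda$, i.e. $\inf_A p\ge p(u)-C/\lambda$. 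Hmm, both give lower bounds on the inf. I will need to be more careful: apply the superdifferential inequality in the \emph{opposite} direction too, e.g. test at $x_0+\lambda(u-a)$-type points, or rather observe that to get $\inf_A p - \inf_B p\le\cdots$ I should bound $\inf_A p$ from above and $\inf_B p$ from below, which means I need, for some specific $a^*\in A$, a bound $p(a^*)\le\cdots$, and for \emph{all} $b\in B$, a bound $p(b)\ge\cdots$. Testing at $\lambda a+(1-\lambda)v$ controls $p(a)-p(u)$ from above for the $a$ achieving near-infimum of $p$ on $A$; testing at $\lambda u+(1-\lambda)b$ controls $p(b)-p(v)$ from below... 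Actually the second inequality above, $-(1-\lambda)p(b-v)\ge\lambda r+(1-\lambda)s-\psi(x_0)-\e$, rearranges to $p(b)-p(v)\le -(1-\lambda)^{-1}(\lambda r+(1-\lambda)s-\psi(x_0)-\e)$, an upper bound; so I want instead to test at a point that increases the $B$-component. I will use $x_0+(1-\lambda)(b-v)$ — no, that point is $\lambda u+(1-2(1-\lambda))v+(1-\lambda)b$, not obviously in $[A,B]$ unless I also adjust. The robust fix, which I expect to be the main technical obstacle, is to choose the test directions symmetrically: to bound $\inf_A p-\inf_B p$ from above, pick near-minimizers $a^*$ of $p$ on $A$ and note $\inf_B p\le p(v)$ trivially is the wrong sign again.

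Given the intended conclusion \eqref{eq:prop:psi}, the right bookkeeping is: the superdifferential inequality at $\lambda a+(1-\lambda)v$ yields, for every $a\in A$,
$$
\inf_B p \le p(v),\qquad p(a)-p(u)\le \frac{\psi(x_0)-\lambda r-(1-\lambda)s+\e}{\lambda},
$$
where I used $\psi(\lambda a+(1-\lambda)v)\ge \lambda r+(1-\lambda)s$ with the sign as $p(\lambda a+(1-\lambda)v - x_0)=\lambda p(a-u)\ge \psi(\lambda a+(1-\lambda)v)-\psi(x_0)-\e$; taking $a$ to nearly minimize $p$ over $A$ gives $\inf_A p\le p(u)+\lambda^{-1}(\psi(x_0)-\lambda r-(1-\lambda)s+\e)$. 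Symmetrically, using a representation with $u,v,\lambda$ and testing at $\lambda u+(1-\lambda)b$ gives $\inf_B p\ge p(v)+ (\text{nonneg})$ — I would instead test the superdifferential at $\lambda u+(1-\lambda)v - (1-\lambda)(b-v)+\cdots$; since this sign-chasing is exactly the delicate part, I would carry it out carefully in the final write-up, using the two extreme directions $x_0\pm(\text{move toward }A\text{ resp. }B)$ and the lower-bound estimate $\lambda\ge (\psi(x_0)-s)/(r-s)$ (valid up to $\eta$ and assuming WLOG $r>s$; the case $r<s$ is symmetric). Substituting this lower bound on $\lambda$ into $\lambda^{-1}(\psi(x_0)-\lambda r-(1-\lambda)s)$, and noting $\psi(x_0)-\lambda r-(1-\lambda)s\to 0$ as the representation becomes optimal, the error term collapses to exactly $\e/\lambda\le \e(r-s)/(\psi(x_0)-s)$, and the main term becomes $r-s$, giving \eqref{eq:prop:psi} after letting $\eta\to 0$. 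The hypothesis $\psi(x_0)\ne s$ is precisely what keeps $\lambda$ bounded away from $0$ so that division by $\lambda$ is legitimate and the stated bound is finite.
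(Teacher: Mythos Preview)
Your setup is right---represent $x_0=\lambda u+(1-\lambda)v$ with $\psi(x_0)=\lambda r+(1-\lambda)s$, and use the $\e$-superdifferential inequality---but the ``sign-chasing'' you flag as delicate is in fact where your argument breaks down, and the missing idea is concrete. Testing the superdifferential at $\lambda a+(1-\lambda)v$ for arbitrary $a\in A$ gives
\[
\lambda\,p(a-u)\ge \lambda r+(1-\lambda)s-\psi(x_0)-\e,
\]
i.e.\ a \emph{lower} bound on $p(a)$ for every $a\in A$, hence a lower bound on $\inf_A p$. You need an \emph{upper} bound on $\inf_A p$, and no variation of this test point will flip the direction: the superdifferential inequality always produces lower bounds on $p(\cdot-x_0)$. (Your attempt to test at ``$x_0-(1-\lambda)(b-v)$'' contains a sign error; that point is not $\lambda u+(1-\lambda)b$.) If one patches things by taking the trivial upper bound $\inf_A p\le p(u)$ together with the lower bound on $\inf_B p$ coming from the test at $\lambda u+(1-\lambda)b$, one arrives at $r-s+\e/\lambda+\e/(1-\lambda)$, which is strictly worse than \eqref{eq:prop:psi} and blows up as $\lambda\to 1$.

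The paper's proof resolves this with one observation you did not use: since $x_0=\lambda u+(1-\lambda)v$, the vectors $u-x_0$ and $v-x_0$ are \emph{antiparallel}, namely $u-x_0=-\tfrac{1-\lambda}{\lambda}(v-x_0)$. Hence the single test at $v$, which gives the lower bound $p(v-x_0)\ge s-\psi(x_0)-\e$, automatically yields the \emph{upper} bound
\[
p(u-x_0)=-\tfrac{1-\lambda}{\lambda}\,p(v-x_0)\le \tfrac{1-\lambda}{\lambda}\bigl(\psi(x_0)-s+\e\bigr)
=(1-\lambda)(r-s)+\tfrac{1-\lambda}{\lambda}\e,
\]
so that $l(u):=\psi(x_0)+p(u-x_0)\le r+\tfrac{1-\lambda}{\lambda}\e$ and thus $\inf_A l\le l(u)\le r+\tfrac{1-\lambda}{\lambda}\e$. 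For the $B$-side no testing is needed: from $l\ge\psi-\e$ and $\psi\ge s$ on $B$ one gets $\inf_B l\ge s-\e$ directly. Subtracting gives $\inf_A p-\inf_B p=\inf_A l-\inf_B l\le r-s+\e/\lambda$, and $\lambda=(\psi(x_0)-s)/(r-s)$ finishes. So the two ingredients you were missing are (i) the collinearity trick to reverse the inequality on the $A$-side, and (ii) the direct use of $\psi|_B\ge s$ rather than another superdifferential test on the $B$-side.
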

	\begin{proof}
		Let $p\in\epar^+\psi(x_0)$. Set $l(x):=\psi(x_0)+p(x-x_0)$. Note that by definition we have $l\ge\psi-\e$. First we note that as $(x_0,\psi(x_0))\in\hyp\psi$ (the reader is advised to draw a simple picture of $\psi$), we can find points $(u,\bar{r})\in A\times[-\infty,r]$, $(v,\bar{s})\in B\times[-\infty,s]$ such that:
		$$
		  (x_0,\psi(x_0)) = \lambda(u,\bar{r}) + (1-\lambda)(v,\bar{s}),
		$$
		for some $\lambda\in[0;1]$. Suppose that $\lambda=0$. Then $(x_0,\psi(x_0))=(v,\bar{s})$, or $\psi(x_0)=\psi(v)=\bar{s}\le s$. On the other hand, $\psi(v)\ge s$. So, $\psi(x_0)=s$, a contradiction to $\psi(x_0)\neq s$. Thus, $\lambda\in(0;1]$. Furthermore, as
		$$
			x_0=\lambda u+(1-\lambda)v; \ \psi(x_0)=\lambda\bar{r}+(1-\lambda)\bar{s}
		$$
		and $\bar{r}\le r$ and $\bar{s}\le s$, we have that
		$$
			\psi(x_0)\le\lambda r+(1-\lambda)s
		$$
		and at the same time $(x_0,\lambda r+(1-\lambda)s)=\lambda(u,r)+(1-\lambda)(v,s)\in\hyp\psi$. It follows that
		$$
			\lambda\bar{r}+(1-\lambda)\bar{s}=\lambda r+(1-\lambda)s, \ \text{or equivalently}
		$$
		$$
			\lambda(r-\bar{r})+(1-\lambda)(s-\bar{s})=0.
		$$
		There are two cases:\\
	Case 1: $\lambda\in(0;1)$. In this case we have that $r=\bar{r}$ and $s=\bar{s}$, or
	  \begin{equation}\label{eq:psi_representation}
			(x_0,\psi(x_0))=(\lambda u+(1-\lambda)v,\lambda r+(1-\lambda)s)=\lambda (u, r) + (1-\lambda) (v, s)
		\end{equation}
	It follows that
		$$
			p(v-x_0)\ge\psi(v)-\psi(x_0)-\e\ge s-\psi(x_0)-\e=\lambda(s-r)-\e.
		$$
	As we have
		$$
			u-x_0=-\frac{1-\lambda}{\lambda}(v-x_0),
		$$
	we get
		$$
			p(u-x_0)\le\frac{1-\lambda}{\lambda}[-\lambda(s-r)+\e]=(\lambda-1)(s-r)+\frac{1-\lambda}{\lambda}\e.
		$$
	It follows that
		\begin{eqnarray*}
		  \inf_Al&\le& l(u)=\psi(x_0)+p(u-x_0)\\
		        &\le&\psi(x_0)+(\lambda-1)(s-r)+\frac{1-\lambda}{\lambda}\e=r+\frac{1-\lambda}{\lambda}\e  .
		\end{eqnarray*}
	On the other hand, as $l\ge\psi-\e$, we have
		$$
			\inf_Bl\ge\inf_B(\psi-\e)=\inf_B\psi-\e \ge s-\e
		$$
	and after combining the last two inequalities, we obtain
		$$
			\inf_Bp-\inf_Ap=\inf_Bl-\inf_Al\ge s-\e-r-\frac{1-\lambda}{\lambda}\e=s-r-\frac{\e}{\lambda}.
		$$
	Finally, $s-\psi(x_0)=\lambda(s-r)$, so $\lambda^{-1}=(s-r)/(s-\psi(x_0))$
    and we get \eqref{eq:prop:psi}.\\
	Case 2: $\lambda=1$. Here $r=\bar{r}$. Then $(x_0,\psi(x_0))=(u,r)$. We have
		$$
			\inf_Al\le l(u)=\psi(x_0)+p(u-x_0)=r+p(0)=r,
		$$
		$$
			\inf_Bl\ge\inf_B\psi-\e \ge s-\e
		$$
	Here we get
		$$
			\inf_Bp-\inf_Ap=\inf_Bl-\inf_Al\ge s-r-\e.
		$$
	Since $s-\psi(x_0)=s-\psi(u)=s-r$, this is equivalent to \eqref{eq:prop:psi}.
	\end{proof}
	The following function plays in our proof the role of the linear function in the standard proof of Lagrange Mean Value Theorem.
	\begin{prop}\label{prop:fi}
	Let $A$ and $B$ be convex subsets of the Banach space $X$ and let $r,s\in\mathbb{R}$ be such that $r\neq s$. We consider the function $\psi$ as defined in Proposition~\ref{prop:psi}. Let $K>0$ and
		$$
      \ffi_K(x) :=(-K\|\cdot\|)*\psi(x)= \sup\{\psi(y)-K\|x-y\|:\ y\in X\}.
    $$
  Then $\ffi_K$ is $K$-Lipschitz and concave.

	Let $\bar x$ be such that there exists $c > 0$ for which the sets
  $$
    U:=\{z\in [A,B]:\ \psi(z)-K\|z-\bar x\| > \ffi_K (\bar x) - c\}
  $$
  and
  $$
    V:=\{z\in [A,B]:\ |s-\psi(z)|\} < c
  $$
  do not intersect: $U\cap V=\emptyset$.

  If $p\in\partial^+ \ffi_K(\bar x)$ then
  \begin{equation}\label{prop:fi:conclusion}
    \inf_B p - \inf_A p \ge s-r.
  \end{equation}
	\end{prop}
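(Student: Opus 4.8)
The plan is to reduce the claim about $\ffi_K$ to Proposition~\ref{prop:psi}, which already delivers the desired mean value inequality for $\psi$ itself — the extra sup-convolution with $-K\|\cdot\|$ is essentially a smoothing device that does not destroy the conclusion, provided we are careful about the $\e$-slack. First I would dispatch the easy structural facts: $\ffi_K=(-K\|\cdot\|)*\psi$ is concave as a supremal convolution of concave functions (equivalently, $-\ffi_K$ is an infimal convolution of convex functions), and it is $K$-Lipschitz because it is the sup over $y$ of the functions $x\mapsto\psi(y)-K\|x-y\|$, each of which is $K$-Lipschitz, and $\ffi_K>-\infty$ since $\dom\psi=[A,B]\neq\emptyset$. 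Note also $\ffi_K\ge\psi$ pointwise (take $y=x$).

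The core of the argument is to apply Lemma~\ref{sup_conv} with $f:=-K\|\cdot\|$ and $g:=\psi$. Fix $p\in\pa^+\ffi_K(\bar x)$. For any $\e>0$ I would pick, using the definition of the supremum defining $\ffi_K(\bar x)$, a point $y_\e\in X$ with
$$
  \ffi_K(\bar x)-\e\le \psi(y_\e)-K\|\bar x-y_\e\|=f(\bar x-y_\e)+g(y_\e).
$$
Lemma~\ref{sup_conv} then gives $p\in\epar^+\psi(y_\e)$ with that same $\e$. Now I would invoke the hypothesis that $U\cap V=\emptyset$: the point $y_\e$ satisfies $\psi(y_\e)-K\|y_\e-\bar x\|>\ffi_K(\bar x)-c$ once $\e<c$, so (after checking $y_\e\in[A,B]=\dom\psi$, which holds since otherwise $\psi(y_\e)=-\infty$ and the displayed inequality fails) $y_\e\in U$, hence $y_\e\notin V$, i.e. $|s-\psi(y_\e)|\ge c$. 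In particular $\psi(y_\e)\neq s$, so Proposition~\ref{prop:psi} applies at $x_0:=y_\e$ and yields
$$
  \inf_A p-\inf_B p\le r-s+\frac{r-s}{\psi(y_\e)-s}\,\e.
$$
Since $|\psi(y_\e)-s|\ge c$, the error term is bounded in absolute value by $|r-s|\e/c$, which tends to $0$ as $\e\to0$. Passing to the limit gives $\inf_A p-\inf_B p\le r-s$, i.e. \eqref{prop:fi:conclusion}.

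The main obstacle — really the only delicate point — is making sure the $y_\e$ produced by the supremum actually lands in $U$ and stays uniformly bounded away from the level $\psi=s$, so that the coefficient $(r-s)/(\psi(y_\e)-s)$ in Proposition~\ref{prop:psi} does not blow up as $\e\to0$; this is exactly what the disjointness hypothesis $U\cap V=\emptyset$ is engineered to guarantee, and one just has to track the inequalities carefully (in particular that $\e<c$ forces membership in $U$, and one should also confirm $\ffi_K(\bar x)$ is finite so that the defining supremum is genuinely approximated). A secondary point worth a line is that $\bar x\in\dom\ffi_K$ and $\pa^+\ffi_K(\bar x)\neq\emptyset$ is assumed implicitly by taking $p$ in it, so no separate existence argument is needed here. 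Everything else is the routine limit $\e\downarrow0$.
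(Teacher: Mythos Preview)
Your proposal is correct and follows essentially the same route as the paper: pick near-maximizers $y_\e$ of $\psi(\cdot)-K\|\cdot-\bar x\|$, use Lemma~\ref{sup_conv} to transfer $p\in\pa^+\ffi_K(\bar x)$ to $p\in\epar^+\psi(y_\e)$, invoke the disjointness $U\cap V=\emptyset$ to keep $|\psi(y_\e)-s|\ge c$, apply Proposition~\ref{prop:psi}, and let $\e\downarrow0$. The only cosmetic difference is that the paper phrases this with a maximizing sequence $(x_n)$ and $\e_n\downarrow0$ rather than a continuous parameter $\e$.
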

	\begin{proof}
    Since $\psi$ is bounded from above, $\ffi_K$ is well defined.

    As a sup-convolution of two concave functions $\ffi_K$ is itself concave, see for example \cite[p. 41]{penot}.

    As $-K\|\cdot\|$ is a $K$-Lipschitz function, it easily follows that $\ffi_K$ is also $K$-Lipschitz.

    For the main part note that from $U\cap V=\emptyset$ it easily follows that for any sequence $(x_n)_1^\infty\subseteq [A,B]$ such that
  		\begin{equation}\label{eq:max_sec_1}
        \ffi_K(\bar x) = \lim_{n\to\infty} (\psi(x_n)-K\|\bar{x}-x_n\|)
      \end{equation}
  	it holds
  		\begin{equation}\label{eq:max_sec_2}
  			|s-\psi(x_n)|\ge c, \quad \forall n\mbox{ large enough}.
  		\end{equation}
      We can assume that the latter is fulfilled for all $n$.

		The definition of $\ffi_K$ is equivalent to:
			$$
				\ffi_K(x)=\sup\{\psi(y)-K\|x-y\|:\ y\in[A,B]\},
			$$
			since $\psi=-\infty$ outside $[A,B]$. From \eqref{eq:max_sec_1} it follows that
			we can find $\e_n\downarrow0$ with
			$$
        \psi(x_n)-K\|\bar{x}-x_n\|\le\ffi_K(x_n) < \psi(x_n)-K\|\bar{x}-x_n\| + \e_n.
      $$
			In particular, we have:
			$$
				(-K\|\cdot\|)*\psi(\bar{x})-\e_n<-K\|\bar{x}-x_n\|+\psi(x_n).
			$$
			For $p\in\partial^+\ffi_K(\bar{x})$ we apply Lemma~\ref{sup_conv}. It follows
      that $p\in \partial^+_{\e_n}\psi(x_n)$. So, from Proposition~\ref{prop:psi} and \eqref{eq:max_sec_2} we get
			$$
        \inf_A p - \inf _B p \le r-s + \frac{r-s}{\psi(x_n)-s}\e_n \le
        r-s + \frac{|s-r|}{c}\e_n.
      $$
			Since $\e_n\downarrow 0$, we are done.
  \end{proof}

  We will also need few more preparatory claims.
	\begin{lemm}\label{lem:s}
   Let $A$, $B$ be convex and bounded subsets of the Banach space $X$. Let $r\neq s\in\mathbb{R}$ and let $\psi$ be constructed as in Proposition~\ref{prop:psi}. If the sequence $(x_n)_{n=1}^\infty$ is such that $\psi(x_n)\to s$ as $n\to\infty$, then $d(x_n,B)\to 0$.
  \end{lemm}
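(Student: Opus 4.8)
The plan is a short argument by contradiction, built on the explicit description of $\psi$. First I would unwind the definition $\hyp\psi=\co\{A\times[-\infty,r],\,B\times[-\infty,s]\}$ exactly as in the proof of Proposition~\ref{prop:psi}: since $A$ and $B$ are convex, the convex hull of the two product sets is $\bigcup_{\lambda\in[0,1]}\bigl(\lambda(A\times[-\infty,r])+(1-\lambda)(B\times[-\infty,s])\bigr)$, so for $x\in[A,B]=\dom\psi$
$$
  \psi(x)=\sup\{\lambda r+(1-\lambda)s:\ x=\lambda u+(1-\lambda)v,\ u\in A,\ v\in B,\ \lambda\in[0,1]\},
$$
the admissible ``heights'' below $r$ and $s$ being pushed all the way up because the barycentric weights $\lambda,1-\lambda$ are non-negative. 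Since $\psi(x_n)\to s\in\R$ while $\psi\equiv-\infty$ off $[A,B]$, after discarding finitely many terms I may assume $x_n\in[A,B]$ for every $n$.

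Next, suppose $d(x_n,B)\not\to0$; passing to a subsequence, I fix $\eta>0$ with $d(x_n,B)\ge\eta$ for all $n$, and set $M:=\sup_{u\in A,\,v\in B}\|u-v\|$, which is finite because $A$ and $B$ are bounded. The key observation is that every representation $x_n=\lambda u+(1-\lambda)v$ with $u\in A$, $v\in B$ forces $\lambda\ge\eta/M$: indeed $x_n-v=\lambda(u-v)$, so $\eta\le d(x_n,B)\le\|x_n-v\|=\lambda\|u-v\|\le\lambda M$. Hence in the supremum defining $\psi(x_n)$ only weights $\lambda\in[\eta/M,1]$ occur.

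Feeding this back in finishes the argument. If $r<s$, then for every admissible $\lambda$ one has $\lambda r+(1-\lambda)s=s-\lambda(s-r)\le s-(\eta/M)(s-r)$, whence $\psi(x_n)\le s-(\eta/M)(s-r)$; if $r>s$, then taking any admissible $\lambda$ (one exists since $x_n\in[A,B]$) gives $\psi(x_n)\ge s+\lambda(r-s)\ge s+(\eta/M)(r-s)$. In either case $|\psi(x_n)-s|\ge(\eta/M)\,|r-s|>0$ for all $n$, contradicting $\psi(x_n)\to s$; therefore $d(x_n,B)\to0$. The only point that really needs care is the uniform lower bound $\lambda\ge\eta/M$: it is valid precisely because $d(x_n,B)\ge\eta$ controls the distance to every point of $B$ simultaneously, so it applies to the $B$-component of all representations of $x_n$ at once, and because boundedness of $A\cup B$ bounds $\|u-v\|$ uniformly in $n$ and in the chosen representation — these are exactly the convexity and boundedness hypotheses of the lemma, and essentially nothing else is used. (Degenerate cases where $A$ or $B$ is empty are either vacuous or trivial and need not be treated separately in the intended application.)
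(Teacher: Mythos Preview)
Your argument is correct and uses the same pivot as the paper: in any representation $x_n=\lambda u+(1-\lambda)v$ one has $\|x_n-v\|=\lambda\|u-v\|$, so boundedness of $A\cup B$ ties $d(x_n,B)$ to the barycentric weight $\lambda$. The paper simply runs this directly rather than by contradiction: it chooses $(x_n,\psi(x_n))=\lambda_n(u_n,r)+(1-\lambda_n)(v_n,s)$, reads off $\lambda_n\to0$ from $\lambda_n r+(1-\lambda_n)s=\psi(x_n)\to s$ and $r\neq s$, and concludes $d(x_n,B)\le\|x_n-v_n\|=\lambda_n\|u_n-v_n\|\to0$.
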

  \begin{proof}
    Let $(x_n,\psi(x_n))=\lambda_n(u_n,r)+(1-\lambda_n)(v_n,s)$, for some $u_n\in A$, $v_n\in B$ and $\lambda_n\in[0,1]$.

    From $r\neq s$ and $\lambda_nr+(1-\lambda_n)s\to s$ it immediately follows that $\lambda_n\to 0$.

    But $d(x_n,B)\le \|x_n-v_n\|=\lambda_n\|u_n-v_n\|\to 0$, since $A$ and $B$ are bounded.
  \end{proof}
  \begin{lemm}\label{lem:btwsr}
    Let $A$, $B$ be convex and bounded subsets of the Banach space $X$. Let $r\neq s\in\mathbb{R}$ and let $\ffi_K$ be constructed as in Proposition~\ref{prop:fi} for some $K>0$. Then
    \begin{equation}\label{eq:btwsr}
      \min\{r,s\} \le \ffi_K(x) \le \max\{r,s\},\quad\forall x\in [A,B].
    \end{equation}
  \end{lemm}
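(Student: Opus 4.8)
The plan is to read both estimates straight off the definitions, the essential move being to take the test point $y=x$ in the sup-convolution that defines $\ffi_K$.

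For the upper bound I would first record the crude fact that $\psi(z)\le\max\{r,s\}$ for every $z\in X$. This is immediate from \eqref{def:psi}: exactly as in the proof of Proposition~\ref{prop:psi}, every element of $\hyp\psi=\co\{A\times[-\infty,r],\,B\times[-\infty,s]\}$ can be written as $\lambda(u,\bar r)+(1-\lambda)(v,\bar s)$ with $u\in A$, $v\in B$, $\bar r\le r$, $\bar s\le s$ and $\lambda\in[0,1]$, so its second coordinate is at most $\lambda r+(1-\lambda)s\le\max\{r,s\}$; hence every $t$ with $(z,t)\in\hyp\psi$ obeys $t\le\max\{r,s\}$, and $\psi(z)\le\max\{r,s\}$ (with $\psi(z)=-\infty$ when $z\notin[A,B]$). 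Since $K\|x-y\|\ge0$, this yields
$$\ffi_K(x)=\sup_{y\in X}\bigl(\psi(y)-K\|x-y\|\bigr)\le\sup_{y\in X}\psi(y)\le\max\{r,s\},$$
which is the right-hand inequality in \eqref{eq:btwsr} (and in fact holds for every $x\in X$, not merely on $[A,B]$).

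For the lower bound, fix $x\in[A,B]$ and use the description of $[A,B]$ recalled above to write $x=\lambda u+(1-\lambda)v$ with $u\in A$, $v\in B$, $\lambda\in[0,1]$. Then $(x,\lambda r+(1-\lambda)s)=\lambda(u,r)+(1-\lambda)(v,s)\in\hyp\psi$, so $\psi(x)\ge\lambda r+(1-\lambda)s\ge\min\{r,s\}$. Choosing $y=x$ in the supremum defining $\ffi_K$ then gives $\ffi_K(x)\ge\psi(x)-K\|x-x\|=\psi(x)\ge\min\{r,s\}$, which is the left-hand inequality.

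I do not expect a genuine obstacle: the only point needing a line of justification is the uniform bound $t\le\max\{r,s\}$ on the second coordinates of points of $\hyp\psi$, which is routine and was already used tacitly in the earlier proofs. Note also that the boundedness of $A$ and $B$ assumed in the statement plays no role in this particular lemma — it is kept only for uniformity with the neighbouring results — and that, as observed, the upper estimate is in fact valid for all $x\in X$.
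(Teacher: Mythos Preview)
Your proof is correct and follows the same route as the paper's: both first establish $\min\{r,s\}\le\psi\le\max\{r,s\}$ on $[A,B]$ and then pass to $\ffi_K$ via the inequalities $\psi(x)\le\ffi_K(x)\le\sup\psi$ (the latter from $K\|x-y\|\ge0$, the former by taking $y=x$). The only cosmetic difference is that the paper obtains the bounds on $\psi$ by sandwiching $\hyp\psi$ between $[A,B]\times(-\infty,\min\{r,s\}]$ and $[A,B]\times(-\infty,\max\{r,s\}]$ via set inclusions and convex hulls, whereas you argue pointwise from the convex-combination representation --- the same idea in different dress.
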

  \begin{proof}
    Since
    \begin{eqnarray*}
      (A\cup B)\times (-\infty,\min\{r,s\}]&\subseteq& A\times [-\infty;r]\cup B\times [-\infty;s]\\
      &\subseteq& (A\cup B)\times (-\infty,\max\{r,s\}],
    \end{eqnarray*}
    taking convex envelopes we get
    $$
      [A,B]\times (-\infty,\min\{r,s\}] \subseteq \hyp \psi \subseteq [A,B]\times (-\infty,\max\{r,s\}],
    $$
    or, in other words, $\min\{r,s\}\le \psi \le \max\{r,s\}$ on $[A,B]$.

    Since $\psi\le\max\{r,s\}$, from the definition of sup-convolution it readily follows that $\ffi_K\le\max\{r,s\}$.

    On the other hand, if $x\in [A,B]$ then $\ffi_K(x) \ge \psi(x)\ge\min\{r,s\}$.
  \end{proof}
	\begin{lemm}\label{lem:d}
	 Let $A$, $B$ be convex subsets of the Banach space $X$ and $\delta>0$. For the set $C=\overline{[A,B]_\delta}$ we have that its topological boundary $\partial C$ satisfies
	$$\partial C\subseteq\{x\in C:d(x,[A,B])=\delta\}.$$
	\end{lemm}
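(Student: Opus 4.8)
The plan is to argue by contraposition: I will show that any point $x\in C$ with $d(x,[A,B])<\delta$ lies in the interior of $C$, whence the boundary $\partial C$ can only contain points at distance exactly $\delta$ (points at distance $>\delta$ are not in the closure $C$ at all, so they are automatically excluded). Thus fix $x\in C$ with $d(x,[A,B])=:\rho<\delta$. Since $[A,B]$ is convex (being the convex hull of the two convex sets $A$ and $B$), there is a point — or at least a near-point — $y\in[A,B]$ with $\|x-y\|<\delta$; more precisely pick $y\in[A,B]$ with $\|x-y\|<\rho+\eta$ where $\eta>0$ is chosen so small that $\rho+\eta<\delta$, and set $\gamma:=\delta-(\rho+\eta)>0$.

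The key step is then to check that the open ball $x+\gamma B_X$ is contained in $[A,B]_\delta=[A,B]+\delta B_X$, which will exhibit $x$ as an interior point of $[A,B]_\delta$, hence of its closure $C$. Indeed, for any $z$ with $\|z-x\|<\gamma$ we have
$$
  \|z-y\|\le\|z-x\|+\|x-y\|<\gamma+(\rho+\eta)=\delta,
$$
so $z\in y+\delta B_X\subseteq[A,B]_\delta$. This proves $x+\gamma B_X\subseteq[A,B]_\delta\subseteq C$, so $x\in\mathrm{int}\,C$ and therefore $x\notin\partial C$.

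Combining the two observations: if $x\in\partial C$ then $x\in C$, so $d(x,[A,B])\le\delta$; and $x$ cannot be an interior point, so by the argument above $d(x,[A,B])$ is not $<\delta$. Hence $d(x,[A,B])=\delta$, which is exactly the claimed inclusion. I do not anticipate any real obstacle here — the only mild point of care is the distinction between $d(x,[A,B])<\delta$ and the existence of an actual minimizer $y\in[A,B]$ (the set $[A,B]$ need not be closed, and $X$ need not be reflexive), which is why I work with an approximate nearest point $y$ and an explicit slack $\gamma$ rather than invoking a projection.
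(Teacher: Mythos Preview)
Your proof is correct and follows essentially the same approach as the paper: both argue that points at distance strictly less than $\delta$ from $[A,B]$ lie in $\mathrm{int}\,C$, while points at distance strictly greater than $\delta$ lie outside $C$, forcing $d(x,[A,B])=\delta$ on $\partial C$. The only cosmetic difference is that the paper spells out the ``$>\delta\Rightarrow x\notin C$'' step and declares the ``$<\delta\Rightarrow x\in\mathrm{int}\,C$'' step obvious, whereas you give the details for the latter and merely assert the former.
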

	\begin{proof}
	 First note that since $C$ is closed, we have $\partial C=C\backslash\mathrm{int}C$, where the latter denotes the topological interior of $C$. Observe that
	$$
	   \forall x: \ d(x,[A,B])>\delta \implies x\notin C.
	$$
	Indeed, if $d(x,[A,B])>\delta$ for some $x\in X$ then $\exists\e>0$ with $d(x,[A,B])>\delta+\e$. Then for each $y\in B(x,\e):=\{y\in X:\|y-x\|<\e\}$
	$$\|y-z\|\ge\|z-x\|-\|y-x\| > \|z-x\| - \e, \quad \forall z\in X.$$
	This implies
	$$\inf_{[A,B]}\|y-\cdot\|\ge\inf_{[A,B]}\|x-\cdot\|-\e>\delta+\e-\e=\delta,$$
	which means that $[A,B]_{\delta}\cap B(x,\e)=\emptyset$. Thus $x\notin C$.
	Next, it is obvious that
	$$
	  \forall x: \ d(x,[A,B])<\delta \implies x\in\mathrm{int}C.
	$$
	\end{proof}
  \section{Proof of the main result}\label{sec:proof}
		In this section we prove Theorem~\ref{thm:cl}.

    Fix $\e>0$.

    Fix $s_1$ such that $s<s_1<s+\min\{\e,\e\delta\}$, $s_1<\inf_{B_\delta} f$ and $s_1\neq r$. Note that
    \begin{equation}\label{eq:modsr}
      |r-s_1| < |r-s| + \e.
    \end{equation}
    Also,
		$$
      \frac{\max\{r,s_1\}-\mu}{\delta} < \frac{\max\{r,s\}-\mu}{\delta}+\e.
    $$
    Take $\delta_1\in(0,\delta)$ such that
    \begin{equation}\label{eq:def_K}
      K:= \frac{\max\{r,s_1\}-\mu}{\delta_1} < \frac{\max\{r,s\}-\mu}{\delta}+\e.
    \end{equation}
    Let $\ffi_K$ be the function constructed in Proposition~\ref{prop:fi} with these $r$, $s_1$ and $K$.

    By Lemma~\ref{lem:d} we have $\pa C\subseteq\{x\in C:\  d(x,[A,B])=\delta\}$. If $x\in \pa C$ then $\ffi_K(x)\le \sup\{\psi(y):\ y\in [A,B]\} + \sup \{-K\|y-x\|:\ y\in [A,B]\} \le \max\{r,s_1\} -K\inf \{\|y-x\|:\ y\in [A,B]\}=\max\{r,s_1\} -K\delta < \mu$. That is,
		\begin{equation}\label{eq:boundC}
      \forall x\in \pa C\Rightarrow \ffi_K(x)< \mu.
    \end{equation}
		Set
    $$
      f_1(x):= \begin{cases}
        f(x),& x\in C,\\
        \infty,& x\not\in C.
      \end{cases}
    $$
    Since $C$ is closed, $f_1$ is lower semicontinuous. Also, $\inf f_1  > \mu$ from \eqref{eq:D}.

    From (P2) we have that $\pa f_1(x) = \pa f(x)$ for $x\in C\setminus \pa C$.

    Consider
    $$
      g(x):= f_1(x) - \ffi_K(x).
    $$
     and note that $\dom f_1=\dom g\subseteq C$. From the above and \eqref{eq:boundC} we have that the lower semicontinuous function $g$ ($\ffi_K$ is $K$-Lipschitz, Proposition~\ref{prop:fi}) is bounded below and, moreover,
		\begin{equation}\label{eq:bound_g}
      \inf\{g(x):\ x\in\pa C\} > 0.
    \end{equation}
		We claim that
		\begin{equation}\label{eq:inf_g}
      \inf g \le 0.
    \end{equation}

		Indeed, from \eqref{eq:rs} for any $t > 0$ there is $x\in A$ such that $f_1(x)=f(x)<r+t$. On the other hand, $\ffi_K(x)\ge \psi (x)\ge r$ by the very construction of $\psi$, see \eqref{def:psi}. Therefore, $g(x) < t$.

    Since $-\ffi_K$ is convex and continuous, we can apply $(P4')$ from Proposition~\ref{p4p} to $f_1$ and$-\ffi_K$ to get
    $$
      p_n\in\pa f_1(x_n)\mbox{ and }q_n\in\pa (-\ffi_K)(y_n)=-\pa^+\ffi_K(y_n),
    $$
    such that
    \begin{equation}\label{eq:3cond}
      \|x_n-y_n\|\to 0,\ (f_1(x_n){-}\ffi_K(y_n))\to\inf(f_1{-}\ffi_K),\ \|p_n+q_n\|\to 0.
    \end{equation}
    We will next show that for all $n\in\mathbb{N}$ large enough $(\xi,p)=(x_n,p_n)$ satisfies the conclusions of Theorem~\ref{thm:cl}.
    \begin{lemm}\label{lem:step1}
      $x_n\in \mathrm{int} C$ for all $n\in\mathbb{N}$ large enough and, therefore, $p_n\in\pa f(x_n)$.
    \end{lemm}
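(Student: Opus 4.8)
The plan is to argue by contradiction, exploiting the gap between $\inf g\le 0$ (inequality~\eqref{eq:inf_g}) and $\inf\{g(x):x\in\pa C\}>0$ (inequality~\eqref{eq:bound_g}). First I would note that, since $p_n\in\pa f_1(x_n)$, property (P1) forces $x_n\in\dom f_1\subseteq C$; hence in any case $x_n\in C$, and the only thing left to rule out is that $x_n\in\pa C$ for infinitely many $n$.

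So suppose $x_n\in\pa C$ along a subsequence. Along that subsequence $g(x_n)\ge m:=\inf\{g(x):x\in\pa C\}$, and $m>0$ by \eqref{eq:bound_g}. On the other hand,
$$
  g(x_n)=f_1(x_n)-\ffi_K(x_n)=\bigl(f_1(x_n)-\ffi_K(y_n)\bigr)+\bigl(\ffi_K(y_n)-\ffi_K(x_n)\bigr).
$$
By \eqref{eq:3cond} the first bracket tends to $\inf(f_1-\ffi_K)=\inf g$, while the second is bounded in absolute value by $K\|x_n-y_n\|$, which tends to $0$ because $\ffi_K$ is $K$-Lipschitz (Proposition~\ref{prop:fi}) and $\|x_n-y_n\|\to 0$ by \eqref{eq:3cond}. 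Thus $g(x_n)\to\inf g\le 0$ along the subsequence, contradicting $g(x_n)\ge m>0$.

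Consequently $x_n\in\pa C$ for only finitely many $n$, i.e., $x_n\in C\setminus\pa C=\mathrm{int}\,C$ for all $n$ large enough (recall $C$ is closed, so $\mathrm{int}\,C=C\setminus\pa C$). For such $n$ the consequence of (P2) already recorded before the lemma — that $\pa f_1(x)=\pa f(x)$ on $C\setminus\pa C$ — yields $p_n\in\pa f_1(x_n)=\pa f(x_n)$, which is the assertion. I do not expect a genuine obstacle here; the only point needing a little care is the subsequence bookkeeping, but since we only need to reach a contradiction, passing to a subsequence on which $x_n\in\pa C$ is harmless.
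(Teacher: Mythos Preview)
Your argument is correct and is essentially the same as the paper's: both assume a subsequence with $x_n\in\pa C$, use the $K$-Lipschitz continuity of $\ffi_K$ together with $\|x_n-y_n\|\to0$ to compare $g(x_n)$ with $f_1(x_n)-\ffi_K(y_n)$, and then derive a contradiction from $\inf g\le 0$ versus $\inf_{\pa C}g>0$. Your version is slightly more explicit in invoking (P1) to ensure $x_n\in C$ and in spelling out the final appeal to (P2), but the core idea and the decomposition are identical.
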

    \begin{proof}
      Note that $\inf(f_1{-}\ffi_K) = \inf g \le 0$ by \eqref{eq:inf_g}, so $\lim (f_1(x_n){-}\ffi_K(y_n)) \le 0$ by \eqref{eq:3cond}.

      Assume that there exists subsequense $(x_{n_i})_{i=1}^\infty\subseteq\pa C$. Then \\$(f_1(x_{n_i}){-}\ffi_K(y_{n_i}))
      = g(x_{n_i})- (\ffi_K(x_{n_i})-\ffi_K(y_{n_i}))\ge \inf _{\pa C} g - K\|x_{n_i}-y_{n_i}\|$, since $\ffi_K$ is $K$-Lipschitz (see Proposition~\ref{prop:fi}). From \eqref{eq:bound_g} and \eqref{eq:3cond} it follows that the latter tends to strictly positive limit. Contradiction.
    \end{proof}

    The estimate \eqref{eq:norm_p} is easy to check: from \eqref{eq:3cond} and the $K$-Lipschitz continuity of $\ffi_K$,
    which implies $\|q_n\|\le K$, it follows that $\limsup\|p_n\|\le K$ and we need only recall \eqref{eq:def_K}.
    \begin{lemm}
      For all $n\in\mathbb{N}$ large enough
      $$
        f(x_n) < \inf_{[A,B]}f+|r-s|+\e.
      $$
    \end{lemm}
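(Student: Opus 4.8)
The plan is to combine the three convergences in \eqref{eq:3cond} with the two-sided estimate for $\ffi_K$ from Lemma~\ref{lem:btwsr} and the choice \eqref{eq:modsr} of $s_1$. Recall that $g=f_1-\ffi_K$ and that $\ffi_K$ was constructed from $r$, $s_1$, $K$, so Lemma~\ref{lem:btwsr} gives $\min\{r,s_1\}\le\ffi_K\le\max\{r,s_1\}$ on $[A,B]$; moreover the upper bound $\ffi_K\le\max\{r,s_1\}$ holds on all of $X$ straight from the definition of the sup-convolution, since $\psi\le\max\{r,s_1\}$ everywhere. By Lemma~\ref{lem:step1} we have $x_n\in\mathrm{int}\,C$ for all large $n$, hence $f(x_n)=f_1(x_n)=g(x_n)+\ffi_K(x_n)$, and it is enough to bound $g(x_n)$ and $\ffi_K(x_n)$ from above.

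First I would check that $g(x_n)\to\inf g$. Writing $g(x_n)=\bigl(f_1(x_n)-\ffi_K(y_n)\bigr)+\bigl(\ffi_K(y_n)-\ffi_K(x_n)\bigr)$, the first bracket tends to $\inf(f_1-\ffi_K)=\inf g$ by \eqref{eq:3cond}, while $|\ffi_K(y_n)-\ffi_K(x_n)|\le K\|x_n-y_n\|\to0$ because $\ffi_K$ is $K$-Lipschitz. Next I would sharpen \eqref{eq:inf_g}: for every $x\in[A,B]$ one has $f_1(x)=f(x)$ and, by Lemma~\ref{lem:btwsr}, $\ffi_K(x)\ge\min\{r,s_1\}$, so $g(x)\le f(x)-\min\{r,s_1\}$; taking the infimum over $x\in[A,B]$ (finite, since $A\cap\dom f\neq\emptyset$ and $f$ is bounded below on $C\supseteq[A,B]$) yields $\inf g\le\inf_{[A,B]}f-\min\{r,s_1\}$.

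Finally, pick $\eta>0$ with $|r-s_1|+\eta<|r-s|+\e$, which is possible by \eqref{eq:modsr}. For all large $n$ we have $g(x_n)<\inf g+\eta$ and $\ffi_K(x_n)\le\max\{r,s_1\}$, whence
\begin{align*}
f(x_n)=g(x_n)+\ffi_K(x_n) &< \inf_{[A,B]}f-\min\{r,s_1\}+\max\{r,s_1\}+\eta\\
&= \inf_{[A,B]}f+|r-s_1|+\eta < \inf_{[A,B]}f+|r-s|+\e,
\end{align*}
which is the claim. The only point that needs a little care is replacing the crude bound $\inf g\le0$ of \eqref{eq:inf_g} by $\inf g\le\inf_{[A,B]}f-\min\{r,s_1\}$; after that the $\max$ and $\min$ telescope into $|r-s_1|$ and the strict inequality \eqref{eq:modsr} absorbs the remaining slack $\eta$, so I do not expect a genuine obstacle.
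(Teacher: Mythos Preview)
Your argument is correct and mirrors the paper's proof: both use the $K$-Lipschitz continuity of $\ffi_K$ together with the two-sided estimate of Lemma~\ref{lem:btwsr} to turn the convergence in \eqref{eq:3cond} into the bound $f(x_n)<\inf_{[A,B]}f+|r-s_1|+\nu$, and then absorb the slack via \eqref{eq:modsr} (your $\eta$ is exactly the paper's $\nu:=|r-s|+\e-|r-s_1|$). The only cosmetic difference is that you route the estimate through $g(x_n)+\ffi_K(x_n)$ while the paper works directly with $f(x_n)-\ffi_K(y_n)$, but the steps and ingredients are the same.
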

    \begin{proof}
      Let $\nu:= |r-s|+\e-|r-s_1|$. From \eqref{eq:modsr} we have $\nu > 0$.

      As in the proof of Lemma~\ref{lem:step1} we use the Lipschitz continuity of $\ffi_K$ to see that for all $n$ large enough
      $$
        f(x_n) -\ffi_K(y_n) < \inf\{ f(x)-\ffi_K(x):\ x\in [A,B]\} + \nu.
      $$
      But from \eqref{eq:btwsr} we have $\min\{r,s_1\}\le \ffi _K \le \max\{r,s_1\}$ on $[A,B]$. Therefore,
      $$
        f(x_n) - \max\{r,s_1\} < \inf _{[A,B]} f - \min\{r,s_1\} + \nu.
      $$
      Obviously, $\max\{r,s_1\} - \min\{r,s_1\} = |r-s_1|$ and, therefore,
      $f(x_n) < \inf _{[A,B]} f + |r-s_1| + \nu = \inf _{[A,B]} f + |r-s| + \e$.
    \end{proof}
    \begin{lemm}\label{lem:seq}
      For all $n\in\mathbb{N}$ large enough there exist $c_n>0$ such that the sets
	$$
    	  U_{n,c_n}:=\{z\in [A,B]:\ \psi(z)-K\|z-y_n\| > \ffi_K (y_n) - c_n\}
  	$$
  	and
  	$$
    	  V_{c_n}:=\{z\in [A,B]:\ |s_1-\psi(z)| < c_n\}
  	$$
  do not intersect, that is $U_{n,c_n}\cap V_{c_n}=\emptyset$.
    \end{lemm}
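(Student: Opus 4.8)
The plan is to prove something slightly stronger: that a single constant $c$, \emph{independent of $n$}, makes $U_{n,c}\cap V_c=\emptyset$ for all $n$ large enough, which obviously suffices. Everything reduces to one quantitative fact: that $\ffi_K(y_n)$ strictly dominates $s_1-K\,d(y_n,B)$ by an amount bounded away from zero, i.e. there are $\gamma>0$ and $N$ with
\[
  \ffi_K(y_n)\ \ge\ s_1-K\,d(y_n,B)+\gamma\qquad(n\ge N).
\]
Granting this, suppose $z\in U_{n,c}\cap V_c$. Since $z\in V_c$ we have $\psi(z)<s_1+c$, so $z\in U_{n,c}$ gives $K\|z-y_n\|<\psi(z)-\ffi_K(y_n)+c<s_1-\ffi_K(y_n)+2c$. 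On the other hand, for $c$ small enough $z\in V_c$ forces $d(z,B)<\eta$ for a prescribed $\eta$ — a restatement of Lemma~\ref{lem:s}, made precise below — hence $\|z-y_n\|>d(y_n,B)-\eta$. Combining, $\ffi_K(y_n)<s_1-K\,d(y_n,B)+K\eta+2c$, which contradicts the display once $K\eta+2c<\gamma$.

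The crucial, and I expect the only genuinely delicate, step is the margin inequality, and it rests on the precise choice of $K$ in \eqref{eq:def_K}. First I would show that for every $n$
\[
  f(x_n)+K\,d(x_n,B)\ \ge\ s_1+\gamma_0,\qquad
  \gamma_0:=\min\{\inf_{B_\delta}f-s_1,\ \inf_C f-\mu\}>0,
\]
using only $x_n\in\dom f_1\subseteq C$ (so $f(x_n)=f_1(x_n)\ge\inf_C f$; here $\dom f_1\neq\emptyset$ at $x_n$ by (P1) since $p_n\in\pa f_1(x_n)$). The argument is a dichotomy: if $d(x_n,B)<\delta_1$ then $x_n\in B_\delta$ because $\delta_1<\delta$, so $f(x_n)\ge\inf_{B_\delta}f\ge s_1+\gamma_0$; if $d(x_n,B)\ge\delta_1$ then $K\,d(x_n,B)\ge K\delta_1=\max\{r,s_1\}-\mu\ge s_1-\mu$ by \eqref{eq:def_K}, while $f(x_n)\ge\inf_C f$, so the sum is at least $\inf_C f+s_1-\mu\ge s_1+\gamma_0$. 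This is exactly the place where $K$ being large enough is used; without it the estimate fails.

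The rest is bookkeeping with the $K$-Lipschitz continuity of $\ffi_K$ (Proposition~\ref{prop:fi}). By \eqref{eq:3cond} and \eqref{eq:inf_g}, $g(x_n)=f(x_n)-\ffi_K(x_n)\to\inf g\le 0$ (using $|\ffi_K(x_n)-\ffi_K(y_n)|\le K\|x_n-y_n\|\to 0$), so for $n$ large $g(x_n)<\gamma_0/2$ and hence $\ffi_K(x_n)\ge f(x_n)-\gamma_0/2\ge s_1-K\,d(x_n,B)+\gamma_0/2$. Then $|\ffi_K(y_n)-\ffi_K(x_n)|\le K\|x_n-y_n\|$ and $|d(y_n,B)-d(x_n,B)|\le\|x_n-y_n\|\to 0$ upgrade this to the margin inequality with, say, $\gamma=\gamma_0/4$, for all $n$ large.

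Finally, to pin down the use of Lemma~\ref{lem:s}: fix $\eta>0$ with $K\eta<\gamma_0/16$. If no $c'>0$ had the property that $z\in[A,B]$, $|s_1-\psi(z)|<c'$ implies $d(z,B)<\eta$, one could choose $z_k\in[A,B]$ with $\psi(z_k)\to s_1$ and $d(z_k,B)\ge\eta$, contradicting Lemma~\ref{lem:s}; so such a $c'$ exists. Set $c_n:=c_0:=\min\{c',\gamma_0/16\}$. Then $K\eta+2c_0<3\gamma_0/16<\gamma_0/4=\gamma$, and the computation of the first paragraph yields $U_{n,c_0}\cap V_{c_0}=\emptyset$ for all $n$ large, as required.
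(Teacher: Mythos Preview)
Your proof is correct. The ingredients are exactly those of the paper --- the definition of $K$ via $K\delta_1=\max\{r,s_1\}-\mu$, Lemma~\ref{lem:s}, and the convergence in \eqref{eq:3cond} together with \eqref{eq:inf_g} --- but you organise them differently. The paper argues by contradiction for each fixed large $n$: assuming no $c_n$ works, it extracts a sequence $z_m\in U_{n,1/m}\cap V_{1/m}$, passes to the limit to get $\ffi_K(y_n)=s_1-K\lim\|z_m-y_n\|$, and then uses $\ffi_K(y_n)>\mu-\bar\e$ and Lemma~\ref{lem:s} to force $d(y_n,B)$ small enough that $x_n\in B_\delta$, whence $f_1(x_n)-\ffi_K(y_n)\ge\inf_{B_\delta}f-s_1>\bar\e$, a contradiction. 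You instead isolate the quantitative ``margin inequality'' $\ffi_K(y_n)\ge s_1-K\,d(y_n,B)+\gamma$ up front, proving it by a clean dichotomy on $d(x_n,B)$ that makes the two roles of $K$ explicit, and only then rule out intersection. Your route is a bit more transparent and yields a uniform $c$ independent of $n$ (slightly more than is needed), while the paper's sequential contradiction is shorter on the page; substantively the two arguments are equivalent rearrangements of the same estimate.
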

    \begin{proof}
      Fix $\bar \e > 0$ such that
      \begin{equation}\label{eq:bareps}
        \bar\e < \inf_Cf-\mu,\quad \bar\e < \inf_{B_\delta}f-s_1,\quad \bar \e < \frac{\delta-\delta_1}{1+K^{-1}}.
      \end{equation}
      Let $n$ be so large that for $\bar{x}=x_n$ and $\bar{y}=y_n$ it is fulfilled
      \begin{equation}\label{eq:barxy}
  			\|\bar x - \bar y\| <   \bar \e,\ f_1(\bar{x})-\ffi_K(\bar{y})<\bar{\e},
  		\end{equation}
      see \eqref{eq:3cond}.

	For this fixed $n$ assume the contrary, that is, for any positive $c_n >0$ the sets $U_{n,c_n}$ and $V_{c_n}$ defined with this $c_n$, intersect.

  For any $m\in\mathbb{N}$ chose $z_m\in U_{n,m^{-1}}\cap V_{m^{-1}}$.

	Then the sequence $\{z_m\}\subseteq [A,B]$ satisfies
      \begin{equation}\label{eq:min_seq}
        \ffi_K(\bar{y}) = \lim_{m\to\infty}(\psi(z_m)-K\|z_m-\bar{y}\|),
      \        \lim_{m\to\infty} \psi(z_m) = s_1.
      \end{equation}
      It follows that $\ffi_K(\bar{y}) = s_1 -K\lim_{m\to\infty}\|z_m-\bar{y}\|$.
      But by \eqref{eq:barxy} and \eqref{eq:D} we have $\ffi_K(\bar{y}) > f_1(\bar{x}) - \bar \e > \mu - \bar \e$.
      So,  $K\lim_{m\to\infty}\|z_m-\bar{y}\| < s_1 -\mu + \bar \e \le K\delta_1 +\bar \e$ from \eqref{eq:def_K}. But
       Lemma~\ref{lem:s} implies $d(z_m,B)\to 0$, thus $d(\bar y, B) \le \delta_1 + \bar \e/K < \delta - \bar \e$ from \eqref{eq:bareps}.

       From this, \eqref{eq:barxy} and triangle inequality it follows that $d(\bar x, B)<\delta$. Therefore,
       $\bar x\in B_\delta$ and $f_1(\bar x) \ge \inf_{B_\delta} f$. But from \eqref{eq:min_seq} it is obvious that
       $\ffi_K(\bar y) \le s_1$ and, therefore, $f_1(\bar x) - \ffi_K(\bar y) \ge \inf_{B_\delta} f - s_1 >\bar \e$
       from \eqref{eq:bareps}. This, however, contradicts \eqref{eq:barxy}.
    \end{proof}
    From Proposition~\ref{prop:fi} and Lemma~\ref{lem:seq} it follows that
    $$
      \inf_B(-q_n)-\inf_A(-q_n)\ge s_1-r
    $$
    for all $n$ large enough. Since $A$ and $B$ are bounded, from \eqref{eq:3cond} we get
    $$
      |\inf_Ap_n-\inf_A(-q_n)| \le \|p_n+q_n\|\sup_{X\in A}\|x\|\to 0,
    $$
    $$
      |\inf_Bp_n-\inf_B(-q_n)| \le \|p_n+q_n\|\sup_{x\in B}\|x\|\to 0.
    $$
    From the three above and $s_1>s$ it follows that
    $$
      \inf_B p_n-\inf_A p_n > s-r
    $$
    for all $n$ large enough.

    \textit{The proof of Theorem~\ref{thm:cl} is thus completed.}

  \section*{Acknowledgment.} The second named author would like to thank Prof. R. Deville for poining out the potential for exploration of then new Clarke-Ledyaev inequality. Thanks are also due to Prof. N. Zlateva for valuable suggestions.

 \end{document}